\newtheorem{thm}{Theorem}[section]
\newtheorem{cor}[thm]{Corollary}
\newtheorem{prop}[thm]{Proposition}
\newtheorem{rem}[thm]{Remark}
\newtheorem{defn}[thm]{Definition}
\newcommand{\R}{\mathbb R}
\newcommand{\e}{\varepsilon}
\newcommand{\n}{\nabla}
\newcommand{\la}{\langle}
\newcommand{\ra}{\rangle}
\newcommand{\Om}{\Omega}
\newcommand{\D}{\hbox{D}}
\newcommand{\A}{\zeta}
\newcommand{\C}{\theta}
\begin{document}

\parindent 0pc
\parskip 6pt
\overfullrule=0pt

\title{Liouville-type results for some quasilinear anisotropic elliptic equations}

\author{Alberto Farina$^{\dagger}$, Berardino Sciunzi$^{*}$ and Domenico Vuono$^{*}$}
\address{$^{*}$Dipartimento di Matematica e Informatica, UNICAL, Ponte Pietro  Bucci 31B, 87036 Arcavacata di Rende, Cosenza, Italy}
\email{sciunzi@mat.unical.it, domenico.vuono@unical.it}
\address{$^{\dagger}$  Universit\'e de Picardie Jules Verne, LAMFA, CNRS UMR 7352, 33, rue Saint-Leu 80039 Amines, France}
\email{alberto.farina@u-picardie.fr}

\keywords{Anisotropic Elliptic Equations, Stability, Critical
Exponents}

\date{\today}

\thanks{The authors are members of INdAM.
    B. Sciunzi are partially supported by PRIN project 2017JPCAPN (Italy):
    {\em Qualitative and quantitative aspects of nonlinear PDEs.}}

\maketitle

\date{\today}

\begin{abstract}
We prove some Liouville-type theorems for stable solutions (and solutions stable outside a compact set)
of quasilinear anisotropic elliptic equations. Our results cover the particular case of the pure Finsler $p$-Laplacian.
\end{abstract}

\section{Introduction}

In this paper we study solutions, possibly unbounded and sign-changing, to some quasilinear anisotropic elliptic equations in general Euclidean domains and we obtain new Liouville-type theorems. More precisely, for any domain $\Omega$ of $\R^N$, $N\geq 2$, and $q>1$
we consider the quasilinear elliptic equation:
\begin{equation}\label{eq:Euler-Lagrange}
-\Delta_B^H u=|u|^{q-1}u
\quad \text{in } \Omega\,,
\end{equation}
 
where $\Delta_B^H$ is the anisotropic operator \begin{equation}\label{operatoranisotropic}
\Delta_B^H u=\operatorname{div}(B'(H(\nabla u))\nabla H(\nabla u)),
\end{equation}
$H$ is a Finsler norm and $B$ satisfies some natural growth conditions defined below. Operators of the form \eqref{operatoranisotropic} naturally arise in the description of several anisotropic phenomena
in  material science
\cite{CaHo, Gu}, biology \cite{AnInMa} and in image processing \cite{EsOs,
PeMa}.

If $H$ is the standard Euclidean norm and $B(t)=\frac{t^p}{p}$, $p>1$, the operator $\Delta_B^H$ reduces to the classical $p-$Laplacian operator. In this case, Liouville-type results for non-negative solutions and subcritical values of the exponent $q$, have been obtained in the celebrated papers \cite{GS1,GS2} for $p=2$ and  \cite{SeZo} for $p\neq 2$. The issue of the study of Liouville-type theorems for  stable solutions, and for solutions stable outside a compact set, possibly unbounded and changing-sign, was started in \cite{Farina1, Farina2} for the semilinear case $p=2$ (see also \cite{DF}). The quasilinear case $p\neq 2$  has been developed in \cite{DFSV} (see also \cite{FWM}).

Here we deal with the anisotropic case, i.e., the general case in which $H$ is a Finsler norm (see the assumption $(h_H)$ at the end of this section) and $B$ satisfies
\begin{itemize}
    \item[(h$_B$)]
    \begin{enumerate}
        \item[(i)] $B \in C^2((0,+\infty))$;

        \item[(ii)] $B(0)=B'(0)=0,\quad B(t), B'(t), B''(t)>0 \quad \forall t \in (0,+\infty)$;

        \item[(iii)] there exist $p>1$, $\kappa \in [0,1]$, $\gamma > 0$, $\Gamma > 0$ such that
        \begin{equation}\label{hp:boundAboveBelow}
            \begin{split}
                \gamma(\kappa+t)^{p-2}t \leq & B'(t) \leq \Gamma (\kappa+t)^{p-2}t \quad \forall\,t>0\\
                \gamma(p-1)(\kappa+t)^{p-2} \leq & B''(t) \leq \Gamma (p-1) (\kappa+t)^{p-2}\quad \forall\,t>0\,.
            \end{split}
        \end{equation}
    \end{enumerate}
\end{itemize}

For $p>1$, a function $u \in W^{1,p}_{loc}(\Omega )$ is a weak solution of
\eqref{eq:Euler-Lagrange} if
\begin{equation}\label{debil1}
\int_\Omega B'(H(\nabla u)) \langle \nabla H(\nabla u), \nabla
\varphi \rangle\,dx\,=\,\int_\Omega |u|^{q-1}u\varphi\,dx,
\quad\forall \varphi\in C^1_c(\Omega)\,.
\end{equation}

In \cite{CFV} the authors proved that locally bounded weak solutions $u$ to \eqref{eq:Euler-Lagrange} are actually of class $C^{1,\alpha}_{loc}(\Omega)\cap C^2(\Omega \setminus Z)$,
where $Z$ is the set of critical points of $u$ and $\alpha\in (0,1)$ (as in the isotropic case). In what follows, we shall use repeatedly those regularity properties without recalling them explicitly. Moreover, in \cite{AGF, CaRiSc}, new results are given about the
integrability of the second weak derivatives of the solutions.

Let us recall the following:

\begin{defn}[see \cite{DS1}]
Assume $p\ge 2$. Given a domain $\Omega\subset \R^N$, $\rho\in L^1(\Omega)$, $\rho \ge 0$ and $1\le p<\infty$, we define the space $H_\rho^{1,p}(\Omega)$ as the closure  of $C^1(\Omega)$ with respect to the norm 
\begin{equation}\label{normaH^1}
    \|v\|_{H_\rho^{1,p}(\Omega)}=\left(\int_\Omega|v|^p\right)^{1/p}+\left(\int_\Omega\rho|\nabla v|^p\right)^{1/p}.
\end{equation}
Similarly, the space $H_{0,\rho}^{1,p}(\Omega)$ is defined as the closure of $C_c^1(\Omega)$ in $H_{\rho}^{1,p}(\Omega)$.  \\ The linearized operator of \eqref{eq:Euler-Lagrange} is defined by  
\begin{equation}
\begin{split}
    L_u(v,\varphi)=\int_\Omega B''(H(\nabla u))  \langle \nabla H(\nabla
u), \nabla v \rangle\langle \nabla H(\nabla
u), \nabla \varphi \rangle+ \\
\int_{\Omega}B'(H(\nabla
u))\langle D^2H(\nabla u)\nabla v,\nabla \varphi \rangle
-q|u|^{q-1} v \varphi ,
\end{split}
\end{equation}

for every $v\in H^{1,2}_{\rho,loc}(\Omega)$ and $\varphi \in C_c^1(\Omega)$, where $\rho=H(\nabla u)^{p-2}\in L^1_{loc} (\Omega)$, for $p\ge 2$. The choice of the weight $\rho$ is justified by assumptions $(h_B)$. By density argument we can consider $\varphi \in H^{1,2}_{0,\rho}(\Omega)$.

\end{defn}

\begin{defn}
We say that a weak solution $u$ is stable if
\begin{equation}\label{condizionestability}
L_u(\varphi,\varphi)\ge 0 \qquad\forall \varphi \in C^1_c(\Omega).
\end{equation}

Moreover, we say that a weak solution is stable outside a compact set $K\subset \Omega$ if \eqref{condizionestability} holds for every $\varphi\in C^1_c(\Omega \setminus K).$
\end{defn}

Our main result is stated in the following theorem.

\begin{thm}\label{teoremaliouville1}

Let us assume that $(h_H)$ and $(h_B)$ hold and let $u$ be a weak stable solution of \eqref{eq:Euler-Lagrange} with $p\ge 2$. If one of the following holds 
\begin{itemize}
    \item [(i)] $\Gamma\kappa ^{p-2}(p-1)\ge 1$,  $q> \Gamma^2\kappa ^{p-2}\gamma^{-1}(p-1)^2$ and  
    $$N<g(q):=2\frac{\Gamma^2\kappa^{p-2}\gamma^{-1}(p-1)^2(q-1)+2q+2\sqrt{q(q-\Gamma^2\kappa^{p-2}\gamma^{-1}(p-1)^2)}}{\Gamma^2\kappa^{p-2}\gamma^{-1}(p-1)^2(q-1)}$$
    \item [(ii)]  $0<\Gamma\kappa^{p-2}(p-1)<1$, $q> \kappa ^{2-p}\gamma^{-1}$ and $$N<f(q):=2\frac{\kappa^{2-p}\gamma^{-1}(q-1)+2q+2\sqrt{q(q-\kappa^{2-p}\gamma^{-1})}}{\kappa^{2-p}\gamma^{-1}(q-1)}$$
    \item[(iii)] $\kappa=0$, $q> \Gamma\gamma^{-1}(p-1)$ and  
    $$N<
s(q):=p\frac{\Gamma\gamma^{-1}(p-1)(q-1)+2q+2\sqrt{q(q-\Gamma\gamma^{-1}(p-1))}}{\Gamma\gamma^{-1}(p-1)(q-(p-1))}
$$
\end{itemize}

Then $u\equiv 0$.
\end{thm}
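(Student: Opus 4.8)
The plan is to adapt the now-standard Farina-type stability argument to the anisotropic setting, exploiting the two-sided bounds in $(h_B)$ to reduce everything to a $p$-Laplacian-like inequality. First I would insert a test function of the form $\varphi=u|u|^{\beta-1}\psi^2$ (or rather its truncated/approximated version, since $u$ may be unbounded and $D^2H$ is singular at the origin) into the stability inequality $L_u(\varphi,\varphi)\ge 0$, and separately test the weak formulation \eqref{debil1} with $\varphi=u|u|^{2\beta-1}\psi^2$. The exponent $\beta>0$ is a free parameter to be optimized at the very end; the two values of the threshold on $q$ and on $N$ in cases (i)--(iii) will come precisely from the range of admissible $\beta$, exactly as in \cite{DFSV}. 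Using $(h_B)$(iii), i.e. $\gamma(\kappa+H)^{p-2}\le B''\le \Gamma(\kappa+H)^{p-2}$ and the analogous bound on $B'/H$, together with the ellipticity and $1$-homogeneity of $H$ encoded in $(h_H)$ (so that $\langle\nabla H(\nabla u),\nabla u\rangle=H(\nabla u)$ and $D^2H(\nabla u)\nabla u=0$, hence $\langle D^2H(\nabla u)\nabla v,\nabla v\rangle\ge 0$), the anisotropic quadratic form is squeezed between constants times $\int(\kappa+H(\nabla u))^{p-2}H(\nabla u)^2\,|u|^{\beta-1}\psi^2$ up to gradient-of-$\psi$ remainders. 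This is where the constants $\Gamma\kappa^{p-2}(p-1)$, $\kappa^{2-p}\gamma^{-1}$, etc. enter: in case (i) the worst constant is $\Gamma^2\kappa^{p-2}\gamma^{-1}(p-1)^2$, in case (ii) it is $\kappa^{2-p}\gamma^{-1}$, and in case (iii) ($\kappa=0$, pure homogeneity) it is $\Gamma\gamma^{-1}(p-1)$, which is why the three alternatives produce three different functions $g,f,s$.

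Next I would combine the stability estimate with the equation-tested identity to obtain, after absorbing the lower-order $B''$-term against the $B'$-term, a Caccioppoli-type inequality of the schematic form
\begin{equation*}
\int_\Omega |u|^{q+\beta}\psi^2 \le C(\beta,q,\text{const})\int_\Omega |u|^{q+\beta}\,|\nabla\psi|^{\frac{2(q+\beta)}{q-1}}\,,
\end{equation*}
valid for all $\psi\in C^1_c(\Omega)$ and all $\beta$ in an interval $I_q$ determined by the quadratic (in $\beta$) discriminant condition $q-\text{const}\cdot(\text{something})>0$ — this is the origin of the $\sqrt{q(q-\cdots)}$ terms. The passage from the two tested identities to this inequality is the technical heart: one has to play the weighted Young/Cauchy--Schwarz inequality on the cross terms $\int B'(H)\langle\nabla H,\nabla u\rangle\langle\nabla H,\nabla\psi\rangle u|u|^{\beta-1}\psi$ and on the corresponding $B''$ cross term, and the algebra must be done so that the coefficient in front of $\int|u|^{q+\beta}\psi^2$ stays strictly positive; that positivity is exactly condition on $q$, and once it holds the admissible $\beta$'s fill an open interval whose endpoints feed into $g(q),f(q),s(q)$ via the scaling exponent.

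Finally I would run the standard cutoff/capacity argument: choose $\psi=\psi_R$ equal to $1$ on $B_R$, supported in $B_{2R}$, with $|\nabla\psi_R|\le C/R$. Then the right-hand side is bounded by $CR^{N-\frac{2(q+\beta)}{q-1}}$, and if the scaling exponent $N-\frac{2(q+\beta)}{q-1}<0$ we let $R\to\infty$ to conclude $\int_{\R^N}|u|^{q+\beta}=0$, hence $u\equiv0$. Optimizing over $\beta\in I_q$ yields precisely the dimensional bounds $N<g(q)$, $N<f(q)$, $N<s(q)$ in the three cases (in case (iii) the exponent is $p$ rather than $2$ because the homogeneous weight $H(\nabla u)^{p-2}$ allows a Sobolev-type rescaling with exponent $p$, which is why $s(q)$ has a leading factor $p$ and a $q-(p-1)$ in the denominator). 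The main obstacle I anticipate is not the final cutoff step but the rigorous justification of the test functions: because $u$ can be unbounded, sign-changing, and $\rho=H(\nabla u)^{p-2}$ degenerates on the critical set $Z$, one must first establish the Caccioppoli inequality for smooth truncations $T_k(u)$ (or work with $(\kappa+H(\nabla u))$ regularizations as in the cited regularity papers \cite{CFV,AGF,CaRiSc}) and then pass to the limit $k\to\infty$ using the integrability of the second derivatives and a Fatou/monotone-convergence argument; keeping all constants uniform through this limit, and verifying that the density class $H^{1,2}_{0,\rho}$ is large enough to contain the limiting test functions, is the delicate part.
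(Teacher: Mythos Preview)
Your overall strategy---test the equation with $|u|^{\beta-1}u\,\varphi^p$, test stability with $|u|^{(\beta-1)/2}u\,\varphi^{p/2}$, combine, optimize over $\beta$, then cut off---is exactly the paper's approach, and your identification of the three critical constants is correct. However, there is a genuine gap in your treatment of cases (i) and (ii).

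When $\kappa>0$ the field $B'$ is \emph{not} homogeneous: $B'(t)\sim t$ for $t$ small and $B'(t)\sim t^{p-1}$ for $t$ large. Squeezing the principal terms by $(\kappa+H(\nabla u))^{p-2}H(\nabla u)^2$ is fine, but the cross terms carrying $|\nabla\psi|$ cannot be absorbed with a \emph{single} Young inequality: where $H(\nabla u)$ is small the weight is essentially constant and the natural exponent is $2$, while where $H(\nabla u)$ is large the weight behaves like $H^{p-2}$ and the exponent is $p$. Hence a Caccioppoli inequality with the single right-hand side $\int|\nabla\psi|^{2(q+\beta)/(q-1)}$ is simply not available when $\kappa>0$; the paper explicitly notes that the techniques of \cite{DFSV,Farina2} break down here for this reason. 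The fix is to split $\Omega=\Omega_1\cup\Omega_2$ with $\Omega_1=\{H(\nabla u)<\delta\}$, obtaining instead
\[
\int_\Omega |u|^{q+\beta}\varphi^{pl}\;\le\; C\int_{\Omega_1}|\nabla\varphi|^{2\frac{q+\beta}{q-1}}+C\int_{\Omega_2}|\nabla\varphi|^{p\frac{q+\beta}{q-p+1}}\,.
\]
The cutoff step then produces $CR^{N-2(q+\beta)/(q-1)}+CR^{N-p(q+\beta)/(q-p+1)}$; since the first exponent dominates the second, your final dimensional threshold $N<g(q)$ (resp.\ $f(q)$) is the right one, but your derivation as written does not reach it. Also note that the cutoff must be raised to power $p$ (indeed $pl$), not $2$: with only $\psi^2$ there is no room for the exponent-$p$ Young step on $\Omega_2$. (Incidentally, the factor $|u|^{q+\beta}$ on the right of your displayed Caccioppoli inequality is a slip---it should not be there.)

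A minor point: your worry about truncations is largely unnecessary in this setting. By the regularity results of \cite{CFV} the solution is $C^{1,\alpha}_{loc}$, hence locally bounded, so for $\beta\ge 1$ the functions $|u|^{\beta-1}u\,\varphi^p$ and $|u|^{(\beta-1)/2}u\,\varphi^{p/2}$ lie in $C^1_c(\Omega)$ outright and can be inserted directly, as the paper does.
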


We remark that under our structural assumptions, it also follows that $q>(p-1)\ge 1$.

Our result applies, in particular, to the case of the pure Finsler $p$-Laplacian, namely             \\
$B(t)=\frac{t^p}{p}$. In this case we have the following:

\begin{cor}\label{corollario}
Let us assume that $(h_H)$ holds and let $u$ be a weak stable solution of 
\begin{equation}
    -\operatorname{div}(H(\nabla u)^{p-1}\nabla H(\nabla u))=|u|^{q-1}u \qquad \text{in }\R^N.
\end{equation}
Assume that $p\ge 2$ and 

\begin{equation}
			\left\{\begin{array}{llll}
			p-1<q<\infty	 \qquad &\text{if } N\le  \frac{p(p+3)}{p-1}\\ \\
				p-1<q<q_c(N,p) \qquad &\text{if } N>\frac{p(p+3)}{p-1}\\
				
			\end{array}\right.
	\end{equation}
with
\begin{equation}
    q_c(N,p)=\frac{\left((p-1)N-p\right)^2+p^2(p-2)-p^2(p-1)N+2p^2\sqrt{(p-1)(N-1)}}{(N-p)\left((p-1)N-p(p+3)\right)}.
\end{equation}
Then, $u\equiv 0$.
\end{cor}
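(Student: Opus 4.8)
The plan is to obtain Corollary~\ref{corollario} as a direct specialization of case (iii) of Theorem~\ref{teoremaliouville1}. First I would verify that the pure Finsler $p$-Laplacian fits the hypotheses: for $B(t)=t^p/p$ one has $B'(t)=t^{p-1}$ and $B''(t)=(p-1)t^{p-2}$, so that $(h_B)$ holds with the choice $\kappa=0$ and $\gamma=\Gamma=1$ (indeed with equality throughout \eqref{hp:boundAboveBelow}). Hence Theorem~\ref{teoremaliouville1}(iii) applies and gives $u\equiv 0$ whenever $q>p-1$ and $N<s(q)$, where, after setting $\kappa=0$, $\gamma=\Gamma=1$,
\[
s(q)=p\,\frac{(p-1)(q-1)+2q+2\sqrt{q\,(q-(p-1))}}{(p-1)\,(q-(p-1))}\,.
\]
It therefore suffices to show that the region $\{\,q>p-1,\ N<s(q)\,\}$ coincides with the one described in the statement.

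The next step is to study the scalar function $q\mapsto s(q)$ on $(p-1,+\infty)$. Rewriting it as
\[
s(q)=p\left(\frac{p+1}{p-1}+\frac{p}{q-(p-1)}+\frac{2}{p-1}\sqrt{\,1+\frac{p-1}{q-(p-1)}\,}\,\right),
\]
one sees at once that $s$ is continuous and strictly decreasing, with $s(q)\to+\infty$ as $q\to(p-1)^+$ and $s(q)\to \tfrac{p(p+3)}{p-1}$ as $q\to+\infty$; moreover $s(q)>\tfrac{p(p+3)}{p-1}$ for every finite $q>p-1$. Consequently, if $N\le \tfrac{p(p+3)}{p-1}$ then $N<s(q)$ for all $q>p-1$, so the conclusion of Theorem~\ref{teoremaliouville1} holds for all $q\in(p-1,\infty)$; while if $N>\tfrac{p(p+3)}{p-1}$ there is a unique $q_c=q_c(N,p)\in(p-1,+\infty)$ with $s(q_c)=N$, and $N<s(q)$ holds precisely for $p-1<q<q_c$. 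Note that in this regime $N>p$ and $(p-1)N>p(p+3)$, so the denominator occurring in $q_c(N,p)$ is positive.

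It remains to compute $q_c$ in closed form. Writing $s(q)=N$ as $2p\sqrt{q\,(q-(p-1))}=Aq-C$ with $A:=(p-1)N-p(p+1)$ and $C:=(p-1)\big((p-1)N-p\big)$, and squaring, one is led to the quadratic
\[
(A^2-4p^2)\,q^2-\big(2AC-4p^2(p-1)\big)\,q+C^2=0 .
\]
Here $A^2-4p^2=(p-1)(N-p)\big((p-1)N-p(p+3)\big)$, which is positive in the range under consideration, and — using the identity $C-(p-1)A=p^2(p-1)$ — the discriminant simplifies to $16p^4(p-1)^3(N-1)$, whose square root is $4p^2(p-1)\sqrt{(p-1)(N-1)}$. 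Selecting the root compatible with the constraint $Aq-C\ge 0$ (the other one being ruled out by the strict monotonicity of $s$) and simplifying yields exactly the stated formula for $q_c(N,p)$; combined with the previous paragraph, this proves the corollary. The computational heart of the argument — and the only real obstacle — is precisely this last reduction: carrying out the squaring, recognizing the collapse of the discriminant through the identity above, and checking that the chosen root is the relevant one.
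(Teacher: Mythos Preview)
Your proof is correct and follows exactly the route the paper indicates: the authors' own proof consists of the single sentence ``The proof follows by case $(iii)$ of Theorem~\ref{teoremaliouville1},'' and you have carried out precisely that specialization, additionally supplying the monotonicity analysis of $s(q)$ and the algebraic identification of $q_c(N,p)$ that the paper leaves implicit. Your computations (the rewriting of $s$, the factorization $A^2-4p^2=(p-1)(N-p)\big((p-1)N-p(p+3)\big)$, the identity $C-(p-1)A=p^2(p-1)$, and the resulting collapse of the discriminant) are all correct.
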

\begin{rem}
	Let us stress the fact that
the critical exponent $q_c(N,p)$ is the one found in \cite{DFSV, Farina2} when $H$ is the standard Euclidean norm. Moreover, our result is sharp, see \cite{DFSV, Farina2}.
\end{rem}

Exploiting the Pohozaev identity in the anisotropic setting, see \cite{MSP}, we shall also prove a Liouville-type result for solutions that are stable outside a compact set. 

\begin{thm}\label{teoremaliouville2}
Let us assume that $(h_H)$ and $(h_B)$ hold, with $\kappa=0$ in $(h_B)$.
Let $u$ be a weak solution of \eqref{eq:Euler-Lagrange} which is stable outside a compact set. Assume $p\ge 2$ and 
\begin{equation}\label{assunzioni}
			\left\{\begin{array}{llll}
			\Gamma\gamma^{-1}(p-1)<q<\infty	 \qquad &\text{if } N\le  p\gamma\Gamma ^{-1}\\ \\
				\Gamma\gamma^{-1}(p-1)<q<\frac{N\left(p-\frac{\Gamma}{\gamma}\right)+p}{N\frac{\Gamma}{\gamma}-p}, \qquad &\text{if } N> p\gamma\Gamma^{-1}.\\  
				
			\end{array}\right.
	\end{equation}
	
	Then $u\equiv 0$.
	
\end{thm}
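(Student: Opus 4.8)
The plan is to combine the Pohozaev identity in the anisotropic setting (from \cite{MSP}) with the Liouville-type result for stable solutions, namely Theorem~\ref{teoremaliouville1}(iii), which covers the case $\kappa=0$. The strategy follows the well-established scheme from \cite{Farina2, DFSV}: a solution which is stable outside a compact set and which has finite energy (in an appropriate weighted sense) must satisfy the Pohozaev identity, and the Pohozaev identity, in the admissible range of $q$, forces the energy to vanish, hence $u\equiv 0$; on the other hand, in the range where Theorem~\ref{teoremaliouville1}(iii) applies one already has the conclusion, and the two ranges together cover all the cases claimed in \eqref{assunzioni}.

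More precisely, first I would establish suitable a priori estimates on $u$ away from the compact set $K$ outside which stability holds. Using the stability inequality \eqref{condizionestability} tested against $\varphi = u\,\psi^2$ (or $|u|^{(q+1)/2-1}u\,\psi^2$, in the spirit of the Moser-type iteration of \cite{DFSV}) with $\psi$ a standard cutoff supported in an annulus $A_R = B_{2R}\setminus B_R$, and exploiting the structural bounds \eqref{hp:boundAboveBelow} with $\kappa=0$ together with the uniform ellipticity and boundedness of $D^2 H$ coming from $(h_H)$, one derives that, for $q$ in the stated range and $N$ not too large, the weighted energy $\int_{B_R} H(\nabla u)^p + \int_{B_R}|u|^{q+1}$ grows sublinearly (indeed the annulus terms are absorbed), which yields $u\in L^{q+1}(\R^N)$ and $H(\nabla u)\in L^p(\R^N)$, and in fact that the ``boundary terms at infinity'' in the Pohozaev identity go to zero along a sequence $R_j\to\infty$. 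This is the step I expect to be the main obstacle: controlling the anisotropic gradient terms, in particular the term involving $\langle D^2H(\nabla u)\nabla\varphi,\nabla\varphi\rangle$, requires care because $D^2H$ is only $0$-homogeneous and degenerate in the radial direction, so the Cauchy--Schwarz/Young manipulations must be done with the $H$-adapted quadratic form rather than the Euclidean one, and one must keep track of the constants $\Gamma/\gamma$ to get exactly the threshold $N\le p\gamma\Gamma^{-1}$ and the critical exponent in \eqref{assunzioni}.

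Next, with the decay of the cutoff energies in hand, I would write the anisotropic Pohozaev identity on $B_R$ for the solution $u$: testing \eqref{eq:Euler-Lagrange} formally against $\langle x,\nabla u\rangle$ and using the $p$-homogeneity of $B$-type terms when $\kappa=0$ (so that $B'(H(\nabla u))H(\nabla u)$ behaves like $H(\nabla u)^p$ up to the constants $\gamma,\Gamma$), one obtains
\begin{equation*}
\left(\frac{N-p}{p}\right)\int_{B_R}\!\!\Theta(\nabla u)\,dx \;=\; \frac{N}{q+1}\int_{B_R}\!\!|u|^{q+1}\,dx \;+\; (\text{boundary terms on }\partial B_R),
\end{equation*}
where $\Theta(\nabla u)$ denotes the relevant anisotropic energy density $B'(H(\nabla u))H(\nabla u)$, which by $(h_B)$ is comparable to $H(\nabla u)^p$ with ratio controlled by $\Gamma/\gamma$. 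Combining this with the energy identity obtained by testing \eqref{eq:Euler-Lagrange} against $u$ itself (again with a cutoff, then letting $R\to\infty$), one gets a linear relation between $\int H(\nabla u)^p$-type quantities and $\int|u|^{q+1}$; when $q$ lies strictly below the critical value $\frac{N(p-\Gamma/\gamma)+p}{N\Gamma/\gamma-p}$ (resp.\ for every $q>\Gamma\gamma^{-1}(p-1)$ when $N\le p\gamma\Gamma^{-1}$), the coefficients force $\int_{\R^N}|u|^{q+1}=0$, hence $u\equiv 0$.

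Finally, I would assemble the two regimes: for the range of $q$ covered by Theorem~\ref{teoremaliouville1}(iii) — which applies to stable solutions and a fortiori here once one reduces to a global stable solution, or more directly by localizing the argument of Theorem~\ref{teoremaliouville1} on $\R^N\setminus K$ — the conclusion is immediate; for the complementary range, the Pohozaev argument above closes the gap. A subtle point worth flagging is the interplay between the two thresholds: one must check that the union of $\{q : \text{Thm }\ref{teoremaliouville1}(iii)\text{ applies}\}$ and $\{q : \text{Pohozaev forces triviality}\}$ indeed exhausts the interval in \eqref{assunzioni}, which comes down to comparing $s(q)$ from Theorem~\ref{teoremaliouville1} with the linear-in-$N$ bound $p\gamma\Gamma^{-1}$ and the critical hyperbola; this is a routine but necessary algebraic verification. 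The use of the finite-energy information is what makes the Pohozaev step legitimate, so the truly delicate part remains the cutoff estimate in the anisotropic norm described above.
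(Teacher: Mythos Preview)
Your overall architecture---use stability outside $K$ to get $u\in L^{q+1}(\R^N)$ and $H(\nabla u)\in L^p(\R^N)$, then apply the anisotropic Pohozaev identity together with the energy identity $\int|u|^{q+1}=\int B'(H(\nabla u))H(\nabla u)$---is exactly what the paper does. But the way you propose to organize it has a real problem.

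The split into two regimes, with one regime ``handled by Theorem~\ref{teoremaliouville1}(iii)'', does not work. Theorem~\ref{teoremaliouville1} is stated for \emph{globally} stable solutions on $\R^N$; for a solution that is only stable outside a compact set $K$, you cannot invoke it as written. Your fix of ``localizing the argument on $\R^N\setminus K$'' would at best yield $\int_{\R^N}|u|^{q+\beta}\varphi^{pl}=0$ for test functions $\varphi$ vanishing on a neighbourhood of $K$, hence $u\equiv 0$ on $\R^N\setminus B_{R_0}$ for some $R_0$; you would then need a unique continuation argument to pass to all of $\R^N$, and none is available here for this degenerate anisotropic operator. So that branch of your plan has a genuine gap.

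The point is that the split is unnecessary: the Pohozaev route covers the \emph{entire} range in \eqref{assunzioni}, not just a complementary subrange. What the paper does is apply Proposition~\ref{propk0} (the integral inequality behind Theorem~\ref{teoremaliouville1}(iii), which holds for test functions in $C^1_c(\Omega\setminus K)$ by the remark following Proposition~\ref{propimportante}) with the specific choice $\beta=1$ and an annular cutoff $\psi_R$ that vanishes near $K$. With $\beta=1$ the exponent condition becomes $N<p\frac{q+1}{q-(p-1)}$, and one checks that the upper bound on $q$ in \eqref{assunzioni} is no larger than $\frac{N(p-1)+p}{N-p}$, so this holds throughout the stated range. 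Letting $R\to\infty$ then gives finite energy globally, not just outside $K$. After that, the Pohozaev identity from \cite{MSP} combined with the structural inequality $B(t)\le \frac{\Gamma}{\gamma p}B'(t)t$ and the energy identity yields $\left(\frac{N}{q+1}-\frac{N\Gamma}{p\gamma}+1\right)\int|u|^{q+1}\le 0$, and the sign of the coefficient is exactly governed by \eqref{assunzioni}. No appeal to Theorem~\ref{teoremaliouville1} is needed, and no case distinction in $q$ beyond the one already in \eqref{assunzioni}. Also note that the Pohozaev identity here is $\frac{N}{q+1}\int|u|^{q+1}=\int\bigl(NB(H(\nabla u))-B'(H(\nabla u))H(\nabla u)\bigr)$, not the $(N-p)/p$ formula you wrote; the latter is only valid when $B(t)=t^p/p$ exactly, and using it would produce the wrong threshold.
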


    Although the case $k\neq 0$ is non degenerate, in our framework it turns out to be quite involved, since the techniques developed in \cite{DFSV,Farina2} do not work in this case. This is due to the fact that, when $k$ is not zero, we have a loss of the homogeneity proprieties of the stress field $B'(H(\nabla u))\nabla H(\nabla u)$. Indeed, in this case, $B'(t)\sim t^{p-1}$ for $t$ large, while $B'(t)\sim t$ for $t$ small. We also note that the parameter $k$ plays a role in the statement of our Liouville-type theorem.

 We conclude the introduction by giving the assumptions on the norm $H$. \\ 
 
 A function $H$ is said a Finsler norm if:
\begin{itemize}
    \item[(h$_H$)]
\begin{enumerate}
    \item[(i)] $H$ is a norm of class $C^2(\R^N\setminus \{0\})$;

    \item[(ii)] the set $\mathcal{B}_1^H:=\{\xi \in \R^N  :  H(\xi) < 1\}$ is \emph{uniformly convex}, i.e., all the principal curvatures of its boundary are bounded away from zero. \,
\end{enumerate}
\end{itemize}

\begin{rem}\label{RMK1}\rm

We recall that (see \cite{CFV2,CFV}) assumption (ii) in (h$_H$) is equivalent to the following property:
\begin{equation}
\exists \Lambda > 0: \quad \langle D^2H(\xi)v, v \rangle \geq \Lambda |v|^2 \quad \forall \xi \in \partial \mathcal{B}_1^H, \; \forall v \in \nabla H(\xi)^\bot
\end{equation}
and it is equivalent to assume that $\D^2 (H^2)$ is positive definite.
\end{rem}

Since $H$ is a norm in $\R^N$, we immediately get that there exist $c_1,c_2>0$ such that:
\begin{equation}\label{H equiv euclidea}
c_1|\xi|\leq H(\xi)\leq c_2|\xi|,\,\quad\forall\, \xi\in\R^N.
\end{equation}

Moreover, since $H$ is differentiable in $\R^N \setminus \{0\}$ and $1$-homogeneous,
the following identities hold true:
\begin{equation}\label{eulero}
\la\n H(\xi),\xi\ra=H(\xi) \qquad\forall\, \xi \in \R^N\setminus \{0\},
\end{equation}

\begin{equation}\label{grad 0 omog}
\nabla H(t\xi)=\hbox{sign}(t)\n H(\xi) \qquad\forall\,\xi \in \R^N\setminus \{0\},\, \forall t\neq 0,
\end{equation}

\begin{equation}\label{hess -1 omog}
\D^2 H(t\xi)=\frac{1}{|t|}\D^2H(\xi) \qquad\forall\,\xi \in  \R^N\setminus \{0\},\, \forall t\neq 0,
\end{equation}

\begin{equation}\label{hess nullo}
\D^2H(\xi)\xi=0 \qquad\forall\,\xi\in\R^N\setminus \{0\}.
\end{equation}
Finally, by \eqref{grad 0 omog} it follows that
there exists $M>0$ such that \begin{equation}\label{grad H limitato}
    |\nabla H(\xi)| \le  M \qquad \forall \xi\in\R^N\setminus \{0\}.
\end{equation}

\section{Main results}

The main tool for the proof of Theorems \ref{teoremaliouville1}
is the inequality stated in the following proposition.

\begin{prop}\label{propimportante}
Let $\Omega$ be a domain in $\R^N$, with $N\ge 2$, and $u$ be a weak stable solution of \eqref{eq:Euler-Lagrange}
with $p\ge 2$.
Then, if $\Gamma\kappa^{p-2}(p-1)\ge 1$ and $q>\Gamma ^2\kappa^{p-2}\gamma ^{-1}(p-1)^2$, for any
        \begin{equation}\label{valoribeta}
            \beta \in \left[1, \frac{2q-\Gamma ^2\kappa^{p-2}\gamma^{-1}(p-1)^2+2\sqrt{q(q-\Gamma ^2\kappa^{p-2}\gamma ^{-1}(p-1)^2)}}{\Gamma ^2\kappa^{p-2}\gamma ^{-1}(p-1)^2}\right)
        \end{equation}

and for any integer
\begin{equation}\label{assunzionel}
    l>\max\left\{ \frac{q+\beta}{q-(p-1)},2\right\}
\end{equation}
there exist a positive constant
$C=C(B,H,\delta,\e,q,\beta)$ such that
\begin{equation}\label{propfondamentale}
     \int_\Omega |u|^{q+\beta}\varphi ^{pl}\le C \int_{\Omega_1}|\nabla \varphi| ^{2\frac {q+\beta}{q-1}}+ C\int_{\Omega_2}|\nabla \varphi| ^{p\frac {q+\beta}{q-p+1}}\quad \forall \varphi\in C^1_c(\Omega),\quad 0\le\varphi\le 1,
\end{equation}
 where $\Omega_1:=\Omega_1(\delta):=\{x\in \Omega \,:\, H(\nabla u(x)) <\delta \}$ and $\Omega _2:=\Omega_2(\delta):= \Omega \setminus \Omega _1$, with $\delta >0$ fixed.

\

On the other hand, if $0<\Gamma\kappa^{p-2}(p-1)<1$ and $q>\kappa ^{-p+2}\gamma ^{-1}$, for any
    
\begin{equation}\label{valoribeta1}
          \beta \in
    \left[1, \frac{2q-\kappa^{-p+2}\gamma^{-1}+2\sqrt{q(q-\kappa^{-p+2}\gamma ^{-1})}}{\kappa^{-p+2}\gamma ^{-1}}\right)
    \end{equation}
   and for any integer $l$ as in \eqref{assunzionel}, the inequality \eqref{propfondamentale} still holds true.
\end{prop}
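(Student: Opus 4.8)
The plan is to derive the stability-based integral estimate \eqref{propfondamentale} by the standard Farina-type test function argument, adapted to the anisotropic operator. First I would plug $\varphi = |u|^{\beta-1}u\,\psi^{pl}$ (suitably truncated near the critical set $Z$ of $u$ to make it admissible, then pass to the limit removing the truncation) into the stability inequality $L_u(\varphi,\varphi)\ge 0$. Expanding the three terms of $L_u$ and using $\nabla\varphi = \beta|u|^{\beta-1}\psi^{pl}\nabla u + pl\,|u|^{\beta-1}u\,\psi^{pl-1}\nabla\psi$, together with the $1$-homogeneity identities \eqref{eulero}–\eqref{hess nullo} (in particular $\langle\nabla H(\nabla u),\nabla u\rangle = H(\nabla u)$ and $D^2H(\nabla u)\nabla u = 0$, which kill the cross terms in the $D^2H$ piece against $\nabla u$), I obtain an inequality of the form
\[
q\int_\Omega |u|^{q+\beta-1}\psi^{pl}
\ \le\ \beta^2\int_\Omega B''(H(\nabla u))H(\nabla u)^2|u|^{2\beta-2}\psi^{pl}
+ \text{(mixed terms with }\nabla\psi\text{)}.
\]
Separately, I would insert the test function $\varphi = |u|^{2\beta-1}u\,\psi^{pl}$ into the weak formulation \eqref{debil1} of the equation itself; using $B'(H(\nabla u))\langle\nabla H(\nabla u),\nabla u\rangle = B'(H(\nabla u))H(\nabla u)$ this produces
\[
(2\beta-1)\int_\Omega B'(H(\nabla u))H(\nabla u)\,|u|^{2\beta-2}\psi^{pl}
+ \text{(term with }\nabla\psi\text{)}
= \int_\Omega |u|^{q+2\beta-1}\psi^{pl}.
\]

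The key is then to compare the gradient-bulk quantities $B''(H(\nabla u))H(\nabla u)^2$ and $B'(H(\nabla u))H(\nabla u)$ using the two-sided bounds in \eqref{hp:boundAboveBelow}. On the set $\{H(\nabla u)\ge\kappa\}$ one has $B''(t)t^2\le \Gamma(p-1)(\kappa+t)^{p-2}t^2 \le \Gamma(p-1)2^{p-2}t^p$ and $B'(t)t\ge \gamma 2^{p-2}t^p$ on the same regime, while on $\{H(\nabla u)<\kappa\}$ one gets $B''(t)t^2\le \Gamma(p-1)\kappa^{p-2}t^2$ and $B'(t)t\ge \gamma\kappa^{p-2}t^2$; the two sub-cases of the Proposition correspond precisely to whether $\Gamma\kappa^{p-2}(p-1)\ge 1$ or $<1$, i.e. which of these ratios dominates. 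This yields, after combining the stability inequality with the equation-derived identity (and absorbing the factor $(2\beta-1)/q$ or $q$ appropriately), a bound
\[
\left(q - c(\beta)\right)\int_\Omega |u|^{q+\beta-1}\psi^{pl} \ \le\ \text{(mixed terms)},
\]
where $c(\beta)$ is the relevant constant $\Gamma^2\kappa^{p-2}\gamma^{-1}(p-1)^2$ or $\kappa^{2-p}\gamma^{-1}$ times a quadratic polynomial in $\beta$; the admissible range \eqref{valoribeta}, resp. \eqref{valoribeta1}, is exactly the set of $\beta\ge 1$ for which $q - c(\beta) > 0$, obtained by solving that quadratic inequality (and $\beta=1$ always being allowed since $q$ exceeds the threshold). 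I would then relabel $\beta \mapsto \beta+1$ so the exponent reads $q+\beta$ as in the statement.

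The remaining work is to control the mixed terms, those containing one or two factors of $\nabla\psi$. Each is estimated by Young's inequality: a typical term like $\int B''(H(\nabla u))H(\nabla u)|u|^{2\beta-1}\psi^{pl-1}|\nabla\psi|$, after using $|\nabla H|\le M$ from \eqref{grad H limitato} and the structure bounds, is split with a small parameter $\e$ into a piece $\e\int (\text{bulk})|u|^{q+\beta-1}\psi^{pl}$ absorbed into the left side, plus a piece $C_\e\int |u|^{q+\beta}|\nabla\psi|^{?}\psi^{?}$; iterating Young once more to move all powers of $|u|$ onto the $|u|^{q+\beta}$ term (this is where the hypothesis $l > (q+\beta)/(q-p+1)$, $l>2$ enters, guaranteeing the leftover power of $\psi$ stays nonnegative and the exponents on $|\nabla\psi|$ come out to $2\frac{q+\beta}{q-1}$ on $\Omega_1$, resp. $p\frac{q+\beta}{q-p+1}$ on $\Omega_2$). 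The split into $\Omega_1=\{H(\nabla u)<\delta\}$ and $\Omega_2$ is precisely to handle the non-homogeneity of $B'$: on $\Omega_2$ (gradient bounded below) $B'(t)t\sim t^p$ so the natural scaling is the $p$-Laplacian one giving the exponent $p\frac{q+\beta}{q-p+1}$, whereas on $\Omega_1$ (gradient small) $B'(t)t\sim t^2$ giving the "$p=2$" exponent $2\frac{q+\beta}{q-1}$. The main obstacle I anticipate is precisely this careful bookkeeping of Young's-inequality exponents across the two regions so that both the absorbed terms genuinely land inside the left-hand side and the surviving integrands are exactly powers of $|\nabla\psi|$ with the stated exponents and no residual $|u|$ factor — this is routine in spirit (it mirrors \cite{DFSV,Farina2}) but delicate because of the $\kappa$-dependent loss of homogeneity, which is exactly the difficulty the authors flag in the paragraph after Theorem \ref{teoremaliouville2}. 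The second point requiring care is the admissibility of the test functions near $Z$, handled by the regularity $u\in C^{1,\alpha}_{loc}\cap C^2(\Omega\setminus Z)$ quoted from \cite{CFV} and a standard cutoff/approximation of $H(\nabla u)^{p-2}$ away from $Z$.
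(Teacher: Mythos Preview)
Your strategy matches the paper's: test the equation and the stability condition with powers of $|u|$ times a cutoff, compare $B''H^2$ with $B'H$ via \eqref{hp:boundAboveBelow} on the two regions $\Omega_1,\Omega_2$, and finish by H\"older. However, your test-function exponents do not close as written. The stability form $L_u(\varphi,\varphi)$ is \emph{quadratic} in $\varphi$: plugging your $\varphi=|u|^{\beta-1}u\,\psi^{pl}$ produces $q\int|u|^{q+2\beta-1}\psi^{2pl}$ on the left and a main gradient term $\beta^2\int B''H^2|u|^{2\beta-2}\psi^{2pl}$, whereas your equation test $|u|^{2\beta-1}u\,\psi^{pl}$ yields $\int|u|^{q+2\beta-1}\psi^{pl}$; the cutoff powers $2pl$ versus $pl$ do not match, so the two estimates cannot be combined, and no relabeling $\beta\mapsto\beta+1$ repairs this. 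The paper resolves it by taking $\psi=|u|^{(\beta-1)/2}u\,\varphi^{p/2}$ in stability and $\phi=|u|^{\beta-1}u\,\varphi^p$ in the equation, so that both produce the same quantity $|u|^{q+\beta}\varphi^p$; the stability coefficient is then $\bigl(\tfrac{\beta+1}{2}\bigr)^2$, not $\beta^2$, and combining through the precise $\max/\min$ of the constants in \eqref{disuguglianzesuB'1}--\eqref{disuguaglianzesuB'2} gives the factor $\alpha$ whose positivity is exactly the range \eqref{valoribeta} resp.\ \eqref{valoribeta1}. Finally, the substitution $\varphi\mapsto\varphi^l$ is done only at the very end, and the residual factors $|u|^{\beta+1}$ on $\Omega_1$ and $|u|^{\beta+p-1}$ on $\Omega_2$ are eliminated by H\"older with conjugate exponents $\bigl(\tfrac{q+\beta}{\beta+1},\tfrac{q+\beta}{q-1}\bigr)$ and $\bigl(\tfrac{q+\beta}{\beta+p-1},\tfrac{q+\beta}{q-p+1}\bigr)$ followed by a single absorption---this, rather than an ``iterated Young'', is where condition \eqref{assunzionel} on $l$ enters (it guarantees the leftover power of $\varphi$ after H\"older is at least $pl$).
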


\begin{rem}
The same result holds true if $u$ is stable outside a compact set $K\subset \Omega$. In this case the test functions $\varphi$ belong to $C^1_c(\Omega\setminus K)$.
\end{rem}

\proof

As already pointed out after Theorem \ref{teoremaliouville2}, the case $\kappa>0$ is quite involved. To overcome these difficulties we shall split the domain $\Omega$ into two subdomains, $\Omega_1$ and $\Omega_2$. We split the proof into several steps.

\textbf{Step 1.} \emph{For any $\beta \ge 1$ and $\e>0$ small enough there exists a positive constant
$\bar{C}=\bar{C}(B,H,\delta,\e)$ such that
\begin{equation}\label{step1}
\begin{split}
    \min \{\beta \gamma \kappa ^{p-2}-\e^2,\frac{\beta \gamma \Gamma^{-1}}{p-1}-\e ^2\} &\left(\int_{\Omega_1} H(\nabla u)^2 |u|^{\beta-1} \varphi ^p +  \int_{\Omega_2} B''(H(\nabla u)H(\nabla u)^2 |u|^{\beta-1} \varphi ^p\right) \\ \le \bar{C} &\int_{\Omega_1} |\nabla \varphi|^2 |u|^{\beta +1}\varphi ^{p-2}+ \bar{C}\int_{\Omega_2} |\nabla \varphi|^p |u|^{\beta+p-1}+\int_{\Omega} |u|^{q+\beta} \varphi ^p
    \end{split}
\end{equation}
for any non negative $\varphi \in C^1_c(\Omega)$.}

Let us consider
$$
\phi =|u|^{\beta-1}u\varphi^p\in C^1_c(\Omega),
$$
so that
$\nabla \phi =p |u|^{\beta-1}u\varphi ^{p-1}\nabla\varphi+\beta |u|^{\beta-1}\varphi ^p \nabla u$.
Taking $\phi$ as test function in \eqref{debil1}, we get:
\begin{equation}\label{eqdistep1}
\begin{split}
\beta&\int_\Omega B'(H(\nabla u)) \langle \nabla H(\nabla u), \nabla
u \rangle |u|^{\beta -1}\varphi ^p\\&=-p\int_\Omega B'(H(\nabla u))
\langle \nabla H(\nabla u), \nabla \varphi \rangle
|u|^{\beta-1}u\varphi^{p-1}+ \int_{\Omega}|u|^{q+\beta} \varphi ^p.
\end{split}
\end{equation}
We emphisize  that, when the gradient vanishes, the following computations are well defined thanks to the structural hypothesis on $B(\cdot).$

Combining \eqref{eulero}, \eqref{grad H limitato} and $(h_B)-(iii)$ with
\eqref{eqdistep1} we get:
\begin{equation}\label{eq}
    \begin{split}
&\beta\int_\Omega B'(H(\nabla u))  H(\nabla u)|u|^{\beta -1}\varphi
^p \\ &\le Mp\int_\Omega B'(H(\nabla u)) |\nabla \varphi|
|u|^{\beta}\varphi^{p-1}+ \int_{\Omega}|u|^{q+\beta} \varphi ^p.
\end{split}
\end{equation}

Under assumptions $(h_B)-(iii)$, there exist positive constants
$C_1=C_1(B,\delta)=\Gamma(\kappa+\delta)^{p-2}$, $\overline{C}_1=\overline{C}_1(B,\delta)=\Gamma(p-1)(\kappa+\delta)^{p-2}$, $C_2=C_2(B,\delta)$ and  
$\overline C_2=\overline C_2(B,\delta)$ such that
\begin{equation}\label{disuguglianzesuB'1}
\begin{split}
     B'(H(\nabla u))&\le C_1 H(\nabla u) \quad\quad \text{in } \Omega_1\\
        B'(H(\nabla u))&\le C_2 H(\nabla u)^{p-1} \quad \text{in } \Omega_2 \\ B''(H(\nabla u))&\le \overline{C}_1  \quad\quad\quad \quad \quad \text{ in } \Omega_1\\
        B''(H(\nabla u))&\le \overline{C}_2 H(\nabla u)^{p-2} \quad \text{in } \Omega_2\,.
    \end{split}
\end{equation}
 Moreover, we have that
 \begin{equation}\label{disuguaglianzesuB'2}
     \begin{split}
        \gamma \kappa ^{p-2}H(\nabla u)\le B'(H(\nabla u)) \quad \text{in } \Omega \\
         B''(H(\nabla u))H(\nabla u)\le \Gamma\gamma ^{-1}(p-1)B'(H(\nabla u)) \quad \text{in }\Omega\,.
     \end{split}
 \end{equation}

Using \eqref{disuguglianzesuB'1} and \eqref{disuguaglianzesuB'2} in
\eqref{eq} we get:

\begin{equation}\label{totequazione}
\begin{split}
\beta\gamma\kappa ^{p-2}&\int_{\Omega_1} H(\nabla u)^2|u|^{\beta-1}\varphi ^p +
\beta\frac{\gamma\Gamma^{-1}}{p-1}\int_{\Omega_2} B''(H(\n u))H(\nabla u)^2 |u|^{\beta-1}\varphi ^p\\ \le
MpC_1&\int_{\Omega_1} H(\nabla u) |\nabla \varphi|
|u|^{\beta}\varphi^{p-1}+ MpC_2\int_{\Omega_2} H(\nabla u)^{p-1}
|\nabla \varphi|  |u|^{\beta}\varphi^{p-1}+
\int_{\Omega}|u|^{q+\beta} \varphi ^p.
\end{split}
\end{equation}

Now we estimate the first two terms of the right side of \eqref{totequazione}. By the standard weighted Young's inequality we get:
\begin{equation}\label{young2}
\int_{\Omega_1} H(\nabla u) |\nabla \varphi|
|u|^{\beta}\varphi^{p-1}
  \le \e^2\int_{\Omega_1} H(\nabla u)^2|u|^{\beta -1}\varphi ^p+\bar{C''}_{\e}\int_{\Omega_1} |\nabla \varphi|^2|u|^{\beta+1}\varphi^{p-2}
\end{equation}
where $\bar{C''}_{\e}$ is a positive constant depending on $\e$.

Using again the weighted Young's inequality with exponents $p$ and $\frac{p}{p-1}$ and
recalling that
\begin{equation}\label{disuguaglianzeB''}
    \gamma (p-1)H(\nabla u)^p\le B''(H(\nabla u)) H(\nabla u)^2 \quad \text{in } \Omega
\end{equation}
we estimate the second term of \eqref{totequazione}:
\begin{equation}\label{young1}
\begin{split}
&\int_{\Omega_2} H(\nabla u)^{p-1} |\nabla \varphi|
|u|^{\beta}\varphi^{p-1}\\&\le \e^2\gamma(p-1)\int_{\Omega_2}
H(\nabla u)^p |u|^{\beta-1}\varphi
^p+\bar{C'}_{\e} \int_{\Omega_2}|\nabla \varphi|^p
|u|^{\beta+p-1}\\&\le \e^2 \int_{\Omega_2}
B''(H(\nabla u))H(\nabla u)^2 |u|^{\beta-1}\varphi
^p+\bar{C'}_{\e} \int_{\Omega_2}|\nabla \varphi|^p
|u|^{\beta+p-1},
\end{split}
\end{equation}
where $\bar{C'}_{\e}$ is a positive constant depending on $\e.$
By \eqref{young1}, \eqref{young2} and \eqref{totequazione} we have

\begin{equation}
\begin{split}
    \min \{\beta \gamma \kappa ^{p-2}-\e^2,\frac{\beta \gamma \Gamma^{-1}}{p-1}-\e ^2\} &\left(\int_{\Omega_1} H(\nabla u)^2 |u|^{\beta-1} \varphi ^p +  \int_{\Omega_2} B''(H(\nabla u)H(\nabla u)^2 |u|^{\beta-1} \varphi ^p\right) \\ \le \bar{C} &\int_{\Omega_1} |\nabla \varphi|^2 |u|^{\beta +1}\varphi ^{p-2}+ \bar{C}\int_{\Omega_2} |\nabla \varphi|^p |u|^{\beta+p-1}+\int_{\Omega} |u|^{q+\beta} \varphi ^p
    \end{split}
\end{equation}
for any non negative $\varphi \in C^1_c(\Omega)$. Thus we have proved \eqref{step1}.

\textbf{Step 2.} \emph{Let us fix $\delta,\epsilon>0$  and set 
\begin{equation}
\alpha=\alpha(\epsilon,\delta)=q-\frac{\max \left\{ \left(\frac{\beta+1}{2}\right)^2\Gamma (\kappa+\delta)^{p-2}+\epsilon^2,\left(\frac{\beta+1}{2}\right)^2+\epsilon^2\right\}}{\min \left\{\beta \gamma \kappa ^{p-2}-\epsilon^2,\frac{\beta \gamma \Gamma^{-1}}{p-1}-\epsilon ^2\right\}}.
    \end{equation}
Then there exist a positive constant $\tilde{C}=\tilde{C}(B,H,\delta,\epsilon,\beta)$ such that
\begin{equation}\label{step2}
\alpha\int_{\Omega} |u|^{q+\beta}\varphi^p\le \tilde{C}\int_{\Omega_1} |\nabla \varphi|^2|u|^{\beta+1}\varphi^{p-2}+\tilde{C}\int_{\Omega_2}|\nabla \varphi|^p |u|^{\beta+p-1}
\end{equation}
for any non-negative $\varphi\in C^1_c(\Omega)$.
}
\\

To prove the claim, let us consider 
$$
\psi=|u|^{\frac{\beta-1}{2}}u\varphi
^{\frac{p}{2}}\in C^1_c(\Omega)\,,
$$
so that $\nabla \psi=
\left(\frac{\beta+1}{2}\right)
\varphi^{\frac{p}{2}}|u|^{\frac{\beta-1}{2}}\nabla
u+\frac{p}{2}|u|^{\frac{\beta-1}{2}}u\varphi ^{\frac{p}{2}-1}\nabla
\varphi.$ Recalling \eqref{hess nullo}, we have  \begin{equation}
    \int_{\Omega}B'(H(\nabla
u))\langle D^2H(\nabla u)\nabla u,\nabla \psi \rangle=\int_{\Omega}B'(H(\nabla
u))\langle D^2H(\nabla u)\nabla \psi,\nabla u \rangle=0
\end{equation} and therefore substituting $\psi$ in the stability condition
\eqref{condizionestability} and recalling the Euler's theorem \eqref{eulero}
we obtain:
\begin{eqnarray}\label{stimastabile}
\nonumber q\int_{\Omega} |u|^{q+\beta}\varphi ^p &\le&
  \left(\frac{\beta+1}{2}\right)^2\int_{\Omega} B''(H(\nabla u))H(\nabla u)^2|u|^{\beta-1}\varphi^p \\
\nonumber &+&\frac{p^2}{4}\int_\Omega B''(H(\nabla u))|\nabla H(\nabla u)|^2|\nabla \varphi|^2|u|^{\beta+1}\varphi^{p-2} \\
\nonumber &+&\left( \frac{\beta+1}{2}\right)p\int_\Omega B''(H(\nabla u))H(\nabla u)|\nabla \varphi||\nabla H(\nabla u)||u|^{\beta}\varphi^{p-1}\\ \nonumber &+&\frac{p^2}{4}\int_\Omega B'(H(\nabla u)) \langle D^2H(\nabla u)\nabla \varphi,\nabla \varphi\rangle |u|^{\beta+1}\varphi^{p-2}\\  
&=&I_1+I_2+I_3+I_4\,,
\end{eqnarray}
By \eqref{disuguglianzesuB'1} and recalling that there exists $M>0$ such that $|\nabla H(\xi)| \le  M$ for every $\xi\in\R^N\setminus \{0\}$, we have 

\begin{equation}
    I_2\le
   \frac{p^2}{4}M^2\overline{C}_1\int_{\Omega_1} |\nabla \varphi|^2|u|^{\beta+1}\varphi^{p-2}+
   \frac{p^2}{4}M^2\overline{C}_2\int_{\Omega_2} H(\nabla u)^{p-2}|\nabla \varphi|^2|u|^{\beta+1}\varphi ^{p-2}
\end{equation}
For\footnote{In the case $p=2$, the estimate \eqref{stimaI_2} follows immediately. Indeed in this case we do not need to use Young's inequality.} $p>2$, we use Young's inequality with exponents
$\frac {p}{2}$ and $\frac{p}{p-2}$ and \eqref{disuguaglianzeB''} to
get:
\begin{equation}\label{stimaI_2}
\begin{split}
   I_2\le
   \frac{p^2}{4}M^2\overline C_1\int_{\Omega_1} |\nabla \varphi|^2|u|^{\beta+1}\varphi^{p-2}
   &+\frac{\e ^2}{3}\int_{\Omega_2} B''(H(\nabla u))H(\nabla u)^{2}|u|^{\beta-1}\varphi ^p\\
   &+\tilde{C}_2\int_{\Omega_2}|\nabla \varphi|^p|u|^{\beta+p-1}\,,
\end{split}
\end{equation}

where $\tilde{C}_2=\tilde{C}_2(B,H,\delta,\e)$ is a
positive constant.

Now we estimate $I_4$. By \eqref{hess -1 omog} there exists $M'>0$ such that $|D^2H(\xi)|\le\frac{M'}{H(\xi)}$.
By \eqref{disuguglianzesuB'1}, \eqref{disuguaglianzeB''} and Young's inequality with suitable exponents, we have
\begin{equation}\label{stimaI_5}
\begin{split}
    I_4 &\le\frac{p^2}{4}M' \int_\Omega \frac{B'(H(\nabla u))}{H(\nabla u)}|\nabla \varphi|^2 |u|^{\beta+1} \varphi^{p-2} \\ & \le \frac{p^2}{4}M'C_1\int_{\Omega_1} |\nabla \varphi|^2 |u|^{\beta+1} \varphi^{p-2}+\frac{p^2}{4}M'C_2\int_{\Omega_2}H(\nabla u)^{p-2}|\nabla \varphi|^2 |u|^{\beta+1} \varphi^{p-2} \\ 
    &\le \frac{p^2}{4}M'C_1\int_{\Omega_1} |\nabla \varphi|^2|u|^{\beta+1}\varphi^{p-2}\\&+\frac{\epsilon ^2}{3}\int_{\Omega_2} B''(H(\nabla u))H(\nabla u)^{2}|u|^{\beta-1}\varphi ^p+\tilde{C}_4\int_{\Omega_2}|\nabla \varphi|^p|u|^{\beta+p-1}
    \end{split}
\end{equation}
 where $\tilde{C}_4=\tilde{C}_4(B,H,\delta,\epsilon)$ is a positive constant. 

Similarly, we have:
\begin{equation}\label{stimaI_3}
\begin{split}
    I_3&\le {\e^2}\int_{\Omega_1}H(\nabla u)^2 |u|^{\beta-1}\varphi ^p+
    \tilde{C}_3\int_{\Omega_1} |\nabla \varphi|^2|u|^{\beta+1}\varphi^{p-2}\\
    &+\frac{\e ^2}{3}\int_{\Omega_2} B''(H(\nabla u))H(\nabla u)^{2}|u|^{\beta-1}\varphi ^p+
    \tilde{C}_3\int_{\Omega_2}|\nabla \varphi|^p|u|^{\beta+p-1}
    \end{split}
\end{equation}

for a positive constant $\tilde{C}_3=\tilde{C}_3(B,H,\delta,\e,\beta)$ .

Plugging \eqref{stimaI_2},
\eqref{stimaI_5}, \eqref{stimaI_3}, 
into \eqref{stimastabile}, we get

\begin{equation}
\begin{split}
     q\int_{\Omega} |u|^{q+\beta}\varphi ^p  \le  &
     \left( \frac{\beta+1}{2}\right)^2\int_\Omega B''(H(\nabla u))H(\nabla u)^{2}|u|^{\beta-1}\varphi ^p \\
     & +\epsilon ^2 \int_{\Omega_1}H(\nabla u)^2 |u|^{\beta-1}\varphi ^p+\epsilon ^2\int_{\Omega_2} B''(H(\nabla u))H(\nabla u)^{2}|u|^{\beta-1}\varphi ^p \\&+\tilde{C}_1\int_{\Omega_1} |\nabla \varphi|^2|u|^{\beta+1}\varphi^{p-2}+\tilde{C}_1\int_{\Omega_2}|\nabla \varphi|^p|u|^{\beta+p-1},
    \end{split}
\end{equation}

where $\tilde{C}_1=\tilde{C}_1(B,H,\delta,\e,\beta)$ is a positive constant.

Now using the third estimate of  \eqref{disuguglianzesuB'1}, recalling that $\overline{C}_1=\Gamma (p-1) (\kappa+\delta)^{p-2}$, we have 
\begin{equation}\label{eqzi}
\begin{split}
     q\int_{\Omega} |u|^{q+\beta}\varphi ^p  \le &
     \max \left\{ \left(\frac{\beta+1}{2}\right)^2\Gamma (p-1) (\kappa+\delta)^{p-2}+\e^2,\left(\frac{\beta+1}{2}\right)^2+\e^2\right\} \\
     &\left(\int_{\Omega_1}H(\nabla u)^2 |u|^{\beta-1}\varphi ^p+\int_{\Omega_2} B''(H(\nabla u))H(\nabla u)^{2}|u|^{\beta-1}\varphi ^p\right) \\&+\tilde{C}_1\int_{\Omega_1} |\nabla \varphi|^2|u|^{\beta+1}\varphi^{p-2}+\tilde{C}_1\int_{\Omega_2}|\nabla \varphi|^p|u|^{\beta+p-1}.
    \end{split}
\end{equation}

Plugging \eqref{step1} into \eqref{eqzi}, we have  \begin{equation}
\alpha\int_{\Omega} |u|^{q+\beta}\varphi^p\le
\tilde{C}\int_{\Omega_1} |\nabla
\varphi|^2|u|^{\beta+1}\varphi^{p-2}+\tilde{C}\int_{\Omega_2}|\nabla
\varphi|^p |u|^{\beta+p-1}
\end{equation}
for any non-negative $\varphi\in C^1_c(\Omega)$.

We note that
\begin{equation}
\lim_{\e,\delta \rightarrow 0} \alpha(\e,\delta)=
q-\frac{\max \left\{
\left(\frac{\beta+1}{2}\right)^2\Gamma (p-1) \kappa^{p-2},\left(\frac{\beta+1}{2}\right)^2\right\}}{
\min \left\{\beta \gamma \kappa ^{p-2},\beta \frac{\gamma \Gamma^{-1}}{p-1} \right\}}\,,
\end{equation}
and, if $\Gamma\kappa^{p-2}(p-1) \ge 1$, we have 
\begin{equation}
    \left(\frac{\beta+1}{2}\right)^2\Gamma (p-1) \kappa^{p-2}\ge  \left(\frac{\beta+1}{2}\right)^2 \quad \text{and} \quad  \beta\frac{\gamma \Gamma^{-1}}{p-1} \le \beta \gamma \kappa ^{p-2}.
\end{equation}
On the other hand, if $\Gamma\kappa^{p-2}(p-1) \le 1$, we have 
\begin{equation}
    \left(\frac{\beta+1}{2}\right)^2\Gamma (p-1) \kappa^{p-2}\le  \left(\frac{\beta+1}{2}\right)^2 \quad \text{and} \quad  \beta\frac{\gamma \Gamma^{-1}}{p-1} \ge \beta \gamma \kappa ^{p-2}.
    \end{equation}
Now the fact that 
$$
q-\frac{\max \left\{
\left(\frac{\beta+1}{2}\right)^2\Gamma (p-1) \kappa^{p-2},\left(\frac{\beta+1}{2}\right)^2\right\}}
{\min \left\{\beta \gamma \kappa ^{p-2},\beta \frac{\gamma \Gamma^{-1}}{p-1} \right\}}>0
$$
follows by the assumptions:
\[
\begin{split}
&\beta \in \left[1, \frac{2q-\kappa^{-p+2}\gamma^{-1}+2\sqrt{q(q-\kappa^{-p+2}\gamma ^{-1})}}{\kappa^{-p+2}\gamma ^{-1}}\right)
\quad \text{if}\quad 0<\Gamma\kappa^{p-2}(p-1)<1\,,\\
&\beta \in \left[1, \frac{2q-\Gamma ^2\kappa^{p-2}\gamma^{-1}(p-1)^2+2\sqrt{q(q-\Gamma ^2\kappa^{p-2}\gamma ^{-1}(p-1)^2)}}{\Gamma ^2\kappa^{p-2}\gamma ^{-1}(p-1)^2}\right)
\quad \text{if}\quad \Gamma\kappa^{p-2}(p-1)\ge 1.
\end{split}
\]

\textbf{Step 3.} \emph{End of the proof.} Set $\tilde{\varphi}=\varphi^l$, with
$\varphi\in C^1_c(\Omega)$, $0\le \varphi\le 1$ and let $l$ be a positive integer.
Substituting $\tilde{\varphi}$ in \eqref{step2}, we obtain:
\begin{equation}
\int_{\Omega}
|u|^{q+\beta}\varphi ^{pl}\le \tilde C l^2\int_{\Omega_1}|\nabla \varphi|^{2}|u|^{\beta+1}\varphi ^{lp-2}+
\tilde C l^p\int_{\Omega_2}|\nabla \varphi|^{p}|u|^{\beta+p-1}\varphi ^{lp-p}\,.
\end{equation}

Using Holder's inequality with exponents $\frac{q+\beta}{\beta+1}$ and $\frac{q+\beta}{q-1}$ in the integral over $\Omega _1$, and exponents $\frac{q+\beta}{\beta+p-1}$ and $\frac{q+\beta}{q-p+1}$ in the integral over $\Omega _2$ we get:
\begin{equation}\label{equaz}
\begin{split}
    \int_\Omega |u|^{q+\beta}\varphi^{lp} \le
    &\tilde Cl^2 \left(\int_{\Omega_1}
    \left(|u|^{\beta+1}\varphi ^{lp-2}\right)^{\frac{q+\beta}{\beta+1}}\right)^{\frac{\beta+1}{q+\beta}}
    \left(\int_{\Omega_1}|\nabla \varphi|^{2\frac{q+\beta}{q-1}}\right)^{\frac{q-1}{q+\beta}}\\
    +&\tilde C l^p\left(\int_{\Omega_2}\left(|u|^{\beta+p-1}\varphi ^{lp-p}\right)^{\frac{q+\beta}{\beta+p-1}}\right)^{\frac{\beta+p-1}{q+\beta}}
    \left(\int_{\Omega_2}|\nabla \varphi| ^{p\frac{q+\beta}{q+p-1}}\right)^{\frac{q-p+1}{q+\beta}}\,.
    \end{split}
\end{equation}
By \eqref{assunzionel} we deduce that:
\begin{equation}\label{condizionekpoi}
    (lp-2)\left(\frac{q+\beta}{\beta+1}\right)\ge (lp-p)\left(\frac{q+\beta}{\beta+p-1}\right)\ge lp
\end{equation}
Moreover, recalling \eqref{condizionekpoi}, and since $0\le\varphi \le 1$, \eqref{equaz} becomes:
\begin{equation}\label{eqquasifinale}
\begin{split}
    \int_\Omega |u|^{q+\beta}\varphi ^{lp}\le &\tilde Cl^2 \left(\int_{\Omega_1}|u|^{q+\beta}\varphi ^{lp}\right)^{\frac{\beta+1}{q+\beta}}\left(\int_{\Omega_1}|\nabla \varphi| ^{2\frac {q+\beta}{q-1}}\right)^{\frac{q-1}{q+\beta}}\\+ & \tilde Cl^p\left(\int_{\Omega_2}|u|^{q+\beta}\varphi ^{lp}\right)^{\frac{\beta+p-1}{q+\beta}}\left(\int_{\Omega_2}|\nabla \varphi| ^{p\frac {q+\beta}{q-p+1}}\right)^{\frac{q-p+1}{q+\beta}}.
    \end{split}
\end{equation}
Multiplying both sides of \eqref{eqquasifinale} by
$\left(\int_{\Omega_1}|u|^{q+\beta}\varphi^{lp}\right)^{-\frac{\beta+1}{q+\beta}}$, we get:
\begin{equation}\label{neweq}
    \left(\int_\Omega |u|^{q+\beta}\varphi ^{lp}\right)^{\frac{q-1}{q+\beta}}\le \tilde Cl^2 \left(\int_{\Omega_1}|\nabla \varphi| ^{2\frac {q+\beta}{q-1}}\right)^{\frac{q-1}{q+\beta}}+ \tilde Cl^p\left(\int_{\Omega}|u|^{q+\beta}\varphi ^{lp}\right)^{\frac{p-2}{q+\beta}}\left(\int_{\Omega_2}|\nabla \varphi| ^{p\frac {q+\beta}{q-p+1}}\right)^{\frac{q-p+1}{q+\beta}}.
\end{equation}
Raising \eqref{neweq} to the exponent $\frac{q+\beta}{q-1}$,
we get 

\begin{equation}\label{zuppa}
     \int_\Omega |u|^{q+\beta}\varphi ^{pl}\le C' \int_{\Omega_1}|\nabla \varphi| ^{2\frac {q+\beta}{q-1}}+ C'\left(\int_{\Omega}|u|^{q+\beta}\varphi ^{pl}\right)^{\frac{p-2}{q-1}}\left(\int_{\Omega_2}|\nabla \varphi| ^{p\frac {q+\beta}{q-p+1}}\right)^{\frac{q-p+1}{q-1}}
\end{equation}

where $C'=C'(B,H,\e,\delta,q,\beta)$ is a positive constant. Using a weighted Young inequality with exponent $\frac{q-1}{p-2}$ and $\frac{q-1}{q-p+1}$ we have 

\begin{equation}
     \int_\Omega |u|^{q+\beta}\varphi ^{pl}\le C \int_{\Omega_1}|\nabla \varphi| ^{2\frac {q+\beta}{q-1}}+ C\int_{\Omega_2}|\nabla \varphi| ^{p\frac {q+\beta}{q-p+1}}
\end{equation}
where $C=C(B,H,\delta,\e,q,\beta)$ is a positive constant.
\endproof

The case $\kappa =0$ is simpler. Indeed, we do not need
to split $\Om$ into $\Om_1$ and $\Om_2$.

\begin{prop}\label{propk0}
Let us assume that $(h_H)$ and $(h_B)$, with $\kappa =0$ in $(h_B)$. 
Let $u$ be a weak stable solution of \eqref{eq:Euler-Lagrange}
with $p\ge 2$ and $q>\Gamma\gamma^{-1}(p-1)$. Then, for any 
\begin{equation}\label{condizionesuk}
   \beta \in \left[1, \frac{2q-\Gamma \gamma^{-1}(p-1)+2\sqrt{q(q-\Gamma \gamma ^{-1}(p-1))}}{\Gamma \gamma ^{-1}(p-1)}\right) 
\end{equation} 
and any integer
\begin{equation}\label{assunzionel2}
    l>\max\left\{ \frac{q+\beta}{q-(p-1)},2\right\}
\end{equation}
there exist a positive constant
$C=C(B,H,\e,q,\beta)$ such that

\begin{equation}\label{propk=0}
\int_{\Omega} B''(H(\nabla u))H(\nabla u)^2|u|^{\beta-1}\varphi^{lp}+|u|^{q+\beta}\varphi^{lp}\le C \int_\Omega |\nabla \varphi|^{p\frac{q+\beta}{q-(p-1)}}
\end{equation}
for every $\varphi\in C^1_c(\Omega),$ with $0\le\varphi\le 1$.
\end{prop}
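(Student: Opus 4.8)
The plan is to follow the three-step scheme of Proposition~\ref{propimportante}, the decisive simplification being that with $\kappa=0$ the function $B$ is genuinely $p$-homogeneous at every scale: by $(h_B)$-(iii) one has $\gamma t^{p}\le B'(t)t\le\Gamma t^{p}$ and $\gamma(p-1)t^{p}\le B''(t)t^{2}\le\Gamma(p-1)t^{p}$, so that $B'(H(\n u))H(\n u)$, $B''(H(\n u))H(\n u)^{2}$ and $H(\n u)^{p}$ are mutually comparable on all of $\Omega$. Hence no decomposition $\Omega=\Omega_{1}\cup\Omega_{2}$ is needed, and each Young-type estimate below contributes only terms $|\n\vf|^{p}|u|^{\beta+p-1}$.

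\emph{Step 1.} Test \eqref{debil1} with $\phi=|u|^{\beta-1}u\,\vf^{p}\in C^{1}_{c}(\Omega)$. Using \eqref{eulero} on the left, \eqref{grad H limitato} and $B'(t)\le\Gamma t^{p-1}$ on the gradient term, and then Young's inequality with exponents $p$ and $p/(p-1)$ together with the comparisons above to absorb it, one gets, for every $\beta\ge1$ and every small $\e>0$,
\[
\Bigl(\tfrac{\beta\gamma}{\Gamma(p-1)}-\e^{2}\Bigr)\io B''(H(\n u))H(\n u)^{2}|u|^{\beta-1}\vf^{p}\le\bar C\io|\n\vf|^{p}|u|^{\beta+p-1}+\io|u|^{q+\beta}\vf^{p}.
\]

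\emph{Step 2.} Insert $\psi=|u|^{(\beta-1)/2}u\,\vf^{p/2}$ into the stability inequality \eqref{condizionestability}. The terms containing $\langle D^{2}H(\n u)\n u,\cdot\rangle$ vanish by \eqref{hess nullo} and, by \eqref{eulero}, the left-hand side equals $q\io|u|^{q+\beta}\vf^{p}$; on the right one finds the four terms $I_{1},\dots,I_{4}$ as in \eqref{stimastabile}. With $\kappa=0$, $I_{2}$ and $I_{4}$ are controlled via $|\n H|\le M$, $|D^{2}H(\xi)|\le M'/H(\xi)$ (from \eqref{hess -1 omog}), $B''(t)\le\Gamma(p-1)t^{p-2}$ and $B'(t)\le\Gamma t^{p-1}$, followed, when $p>2$, by Young's inequality with exponents $p/2$ and $p/(p-2)$ (for $p=2$ these terms are already of the required form), while $I_{3}$ is handled by $B''(t)t\le\Gamma(p-1)t^{p-1}$ and Young with exponents $p/(p-1)$ and $p$. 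Collecting,
\[
q\io|u|^{q+\beta}\vf^{p}\le\Bigl[\bigl(\tfrac{\beta+1}{2}\bigr)^{2}+\e^{2}\Bigr]\io B''(H(\n u))H(\n u)^{2}|u|^{\beta-1}\vf^{p}+\tilde C\io|\n\vf|^{p}|u|^{\beta+p-1}.
\]

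\emph{Combination and conclusion.} Plugging Step~1 into Step~2, the $B''$-integral is absorbed precisely when $\alpha(\e):=q-\frac{(\beta+1)^{2}/4+\e^{2}}{\beta\gamma/(\Gamma(p-1))-\e^{2}}>0$; letting $\e\to0$ this reads $q>\Gamma(p-1)(\beta+1)^{2}/(4\beta\gamma)$, i.e.\ $\beta^{2}-\bigl(\tfrac{4q\gamma}{\Gamma(p-1)}-2\bigr)\beta+1<0$, a quadratic which (since $q>\Gamma\gamma^{-1}(p-1)$) has two positive real roots of product $1$, the larger one being exactly the upper endpoint in \eqref{condizionesuk}; thus \eqref{condizionesuk} is precisely the range of $\beta$ for which $\alpha(\e)>0$ for all small $\e$. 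For such $\beta$, Steps~1 and~2 combine to $\io B''(H(\n u))H(\n u)^{2}|u|^{\beta-1}\vf^{p}+\io|u|^{q+\beta}\vf^{p}\le C\io|\n\vf|^{p}|u|^{\beta+p-1}$. Finally replace $\vf$ by $\vf^{l}$, apply H\"older on the right with exponents $\frac{q+\beta}{\beta+p-1}$ and $\frac{q+\beta}{q-(p-1)}$, and use \eqref{assunzionel2} together with $0\le\vf\le1$ to bound the arising power of $\vf$ by $\vf^{lp}$; since $u$ is locally bounded (by the $C^{1,\alpha}_{loc}$ regularity recalled after \eqref{debil1}) the quantity $\io|u|^{q+\beta}\vf^{lp}$ is finite, hence may be moved to the left, and raising the result to the power $\frac{q+\beta}{q-(p-1)}$ gives \eqref{propk=0}. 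The routine part is the bookkeeping of the Young exponents (and the separate, easier case $p=2$ in $I_{2},I_{4}$); the only genuinely delicate points are matching \eqref{condizionesuk} with the roots of the above quadratic --- where the threshold $q>\Gamma\gamma^{-1}(p-1)$ enters --- and the justification of the final division, i.e.\ the finiteness of $\io|u|^{q+\beta}\vf^{lp}$.
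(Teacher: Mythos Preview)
Your proposal is correct and follows essentially the same three-step scheme as the paper's proof: the same test function $\phi=|u|^{\beta-1}u\,\vf^{p}$ in Step~1, the same $\psi=|u|^{(\beta-1)/2}u\,\vf^{p/2}$ in the stability inequality with the same decomposition into $I_{1},\dots,I_{4}$, the same Young exponents, the same combination yielding the constant $\alpha(\e)$, and the same H\"older argument after replacing $\vf$ by $\vf^{l}$. The only (cosmetic) differences are that you combine the two resulting inequalities before substituting $\vf^{l}$ rather than after, and that you make explicit the finiteness of $\io|u|^{q+\beta}\vf^{lp}$ needed to divide through --- a point the paper uses without comment.
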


\begin{proof}
\textbf{Step 1.}
\emph{For any $\beta \ge 1$ and $\epsilon >0$ small enough there exist a positive constant $\overline{C}=\overline{C}(B,H,\delta,\epsilon)>0$ such that 
\begin{equation}\label{step11}
    \left(\frac{\beta\gamma\Gamma^{-1}}{p-1}-\epsilon ^2\right)\int_\Omega B''(H(\nabla u))H(\nabla u)^2|u|^{\beta-1}\varphi ^p \le \overline{C} \int_\Omega |\nabla \varphi|^p|u|^{\beta+p-1}+\int_\Omega |u|^{q+\beta}\varphi^p 
\end{equation}
for any non negative $\varphi \in C^1_c(\Omega)$.
}

As in the proof of the Proposition \ref{propimportante}, let us consider
$
\phi =|u|^{\beta-1}u\varphi^p
\in C^1_c(\Omega)$, so that
$$\nabla \phi =p |u|^{\beta-1}u\varphi ^{p-1}\nabla\varphi+\beta |u|^{\beta-1}\varphi ^p \nabla u.$$
Using $\phi$ as test function in \eqref{debil1}, we get 

\begin{equation}\label{eq2}
    \begin{split}
&\beta\int_\Omega B'(H(\nabla u))  H(\nabla u)|u|^{\beta -1}\varphi
^p \\ &\le Mp\int_\Omega B'(H(\nabla u)) |\nabla \varphi|
|u|^{\beta}\varphi^{p-1}+ \int_{\Omega}|u|^{q+\beta} \varphi ^p.
\end{split}
\end{equation}

By  $(h_B)-(iii)$, we deduce that
\begin{equation}\label{disB'k=0}
\frac{\gamma\Gamma^{-1}}{p-1}B''(H(\nabla u))H(\nabla u)\le B'(H(\nabla u))\le \Gamma H(\nabla u)^{p-1}.
\end{equation}

By \eqref{disB'k=0} and Young's inequality with exponent $p$ and $\frac{p}{p-1}$, \eqref{eq2} becomes 

\begin{equation}\label{eq3}
    \begin{split}
   &\beta\frac{\gamma\Gamma^{-1}}{p-1}\int_\Omega B''(H(\nabla u))  H(\nabla u)^2|u|^{\beta -1}\varphi
^p\\ \le
\beta&\int_\Omega B'(H(\nabla u))  H(\nabla u)|u|^{\beta -1}\varphi
^p \\ \le Mp&\int_\Omega B'(H(\nabla u)) |\nabla \varphi|
|u|^{\beta}\varphi^{p-1}+ \int_{\Omega}|u|^{q+\beta} \varphi ^p \\ \le \epsilon ^2\gamma(p-1) &\int_\Omega H(\nabla u)^p |u|^{\beta-1}\varphi ^p+\overline{C} \int_\Omega |\nabla \varphi|^p|u|^{\beta+p-1}+\int_\Omega |u|^{q+\beta}\varphi^p \\ \le \e ^2&\int_\Omega B''(H(\nabla u))  H(\nabla u)^2|u|^{\beta -1}\varphi
^p +\overline{C} \int_\Omega |\nabla \varphi|^p|u|^{\beta+p-1}+\int_\Omega |u|^{q+\beta}\varphi^p 
\end{split}
\end{equation}

where in the last inequality we used the fact that $(p-1)\gamma t^{p-2}\le B''(t)$. 

\textbf{Step 2.} 
\emph{Set
\begin{equation}
\alpha=\alpha(\e)=q- \left(\frac{(\beta+1)^2}{4}+\e^2\right)\left(\frac{\beta\gamma}{\Gamma(p-1)}-\e^2\right)^{-1}.
   \end{equation}
There exist a positive constant
$\tilde{C}=\tilde{C}(B,H,\delta,\e,\beta)$
such that
\begin{equation}\label{step22}
\alpha\int_{\Omega} |u|^{q+\beta}\varphi^p\le
\tilde{C}\int_{\Omega}|\nabla
\varphi|^p |u|^{\beta+p-1}
\end{equation}
for any non-negative $\varphi\in C^1_c(\Omega)$.
}

To prove the claim, let us consider
$$
\psi=|u|^{\frac{\beta-1}{2}}u\varphi
^{\frac{p}{2}}\in C^1_c(\Omega)\,,
$$
so that $\nabla \psi=
\left(\frac{\beta+1}{2}\right)
\varphi^{\frac{p}{2}}|u|^{\frac{\beta-1}{2}}\nabla
u+\frac{p}{2}|u|^{\frac{\beta-1}{2}}u\varphi ^{\frac{p}{2}-1}\nabla
\varphi.$ Substituting $\psi$ in \eqref{condizionestability} and recalling \eqref{hess nullo} we obtain:
\begin{eqnarray}\label{stimastabile2}
\nonumber q\int_{\Omega} |u|^{q+\beta}\varphi ^p &\le&
  \left(\frac{\beta+1}{2}\right)^2\int_{\Omega} B''(H(\nabla u))H(\nabla u)^2|u|^{\beta-1}\varphi^p \\
\nonumber &+&\frac{p^2}{4}\int_\Omega B''(H(\nabla u))|\nabla \varphi|^2|u|^{\beta+1}\varphi^{p-2} |\nabla H(\nabla u)|^2\\
\nonumber &+&\left( \frac{\beta+1}{2}\right)p\int_\Omega B''(H(\nabla u))H(\nabla u)|\nabla \varphi||\nabla H(\nabla u)||u|^{\beta}\varphi^{p-1}\\ \nonumber &+&\frac{p^2}{4}\int_\Omega B'(H(\nabla u)) \langle D^2H(\nabla u)\nabla \varphi,\nabla \varphi\rangle |u|^{\beta+1}\varphi^{p-2} \\
&=&I_1+I_2+I_3+I_4\,,
\end{eqnarray}

Now, if $p>2$, by $(h_B)$ and Young's inequality with exponents $\frac {p}{2}$ and $\frac{p}{p-2}$ and  we
get
\begin{equation}\label{stimaI_22}
I_2\le \frac{\e ^2}{3} \int_{\Omega} B''(H(\nabla u))H(\nabla u)^{2}|u|^{\beta-1}\varphi ^p
   +\tilde{C}_2\int_{\Omega}|\nabla \varphi|^p|u|^{\beta+p-1}\,,
\end{equation}

where $\tilde{C}_2=\tilde{C}_2(B,H,\e)$ is a positive constant. For $p=2$, inequality \eqref{stimaI_22} is straightforward.

Now we estimate $I_4$. We remark that there exist a positive constant $M'$ such that  

$|D^2H(\xi)|\le \frac {M'}{H(\xi)}$. Using this fact, \eqref{disB'k=0} and Young's inequality with exponent $p$ and $\frac {p}{p-1}$ we get  

\begin{equation}\label{stimaI55}
    I_4\le \frac{\e ^2}{3} \int_{\Omega} B''(H(\nabla u))H(\nabla u)^{2}|u|^{\beta-1}\varphi ^p
   +\tilde{C}_4\int_{\Omega}|\nabla \varphi|^p|u|^{\beta+p-1}\,,
\end{equation}

with constant $\tilde{C}_4=\tilde{C}_4(B,H,\epsilon)>0.$

Similarly, we have 

\begin{equation}\label{stimaI33}
    I_3\le \frac{\e ^2}{3} \int_{\Omega} B''(H(\nabla u))H(\nabla u)^{2}|u|^{\beta-1}\varphi ^p
   +\tilde{C}_3\int_{\Omega}|\nabla \varphi|^p|u|^{\beta+p-1}\,,
\end{equation}

with constant $\tilde{C}_3=\tilde{C}_3(B,H,\epsilon,\beta)>0.$

Plugging \eqref{stimaI_22}, \eqref{stimaI55},  and \eqref{stimaI33} into \eqref{stimastabile2} we have 

\begin{equation}\label{stimafinalequasi}
    q\int_\Omega |u|^{q+\beta}\varphi ^p \le \left(\frac{(\beta+1)^2}{4}-\e^2\right) \int_{\Omega} B''(H(\nabla u))H(\nabla u)^{2}|u|^{\beta-1}\varphi ^p
   +\tilde{C}_1\int_{\Omega}|\nabla \varphi|^p|u|^{\beta+p-1}\,
\end{equation}

with $\tilde C_1:=\tilde C_2+\tilde C_3+\tilde C_4.$ Plugging \eqref{step11} into \eqref{stimafinalequasi} we deduce 

\begin{equation}
\alpha\int_{\Omega} |u|^{q+\beta}\varphi^p\le
\tilde{C}\int_{\Omega}|\nabla
\varphi|^p |u|^{\beta+p-1}
\end{equation}
where $\tilde  C=\tilde C(B,H,\e,\beta)$ is a positive constant.

Since

\begin{equation}
\lim_{\epsilon\rightarrow 0}\alpha(\e)=q- \left(\frac{(\beta+1)^2}{4}\right)\left(\frac{\beta\gamma}{\Gamma(p-1)}\right)^{-1}
   \end{equation}

and  

\begin{equation}
   \beta \in \left[1, \frac{2q-\Gamma \gamma^{-1}(p-1)+2\sqrt{q(q-\Gamma \gamma ^{-1}(p-1))}}{\Gamma \gamma ^{-1}(p-1)}\right)
\end{equation}   

we deduce that $q- \left(\frac{(\beta+1)^2}{4}\right)\left(\frac{\beta\gamma}{\Gamma(p-1)}\right)^{-1}>0$.

Finally we observe that, $\eqref{step11}$ and $\eqref{step22}$ imply

\begin{equation}\label{stimagradiente}
    \int_{\Omega} B''(H(\nabla u))H(\nabla u)^{2}|u|^{\beta-1}\varphi ^p\le \hat C \int_{\Omega}|\nabla
\varphi|^p |u|^{\beta+p-1}
\end{equation}

where $\hat C=\hat C(B,H,\epsilon,\beta)$ is a positive constant.

\textbf{Step 3.} \emph{End of the proof.} Set $\tilde \varphi =\varphi ^l$, with $\varphi \in C^1_c(\Omega)$, $0\le \varphi\le 1$ and let $l$ be a positive integer. Substituting $\tilde \varphi $ in \eqref{step22} we get

\begin{equation}\label{stimaprec}
\begin{split}
\int_{\Omega}
|u|^{q+\beta}\varphi ^{pl}&\le
\tilde Cl^p\int_{\Omega}|\nabla \varphi|^{p}|u|^{\beta+p-1}\varphi ^{lp-p}\\ &\le 
\tilde C l^p\left(\int_{\Omega}(|u|^{p+\beta-1}\varphi ^{lp-p})^{\frac{q+\beta}{\beta+p-1}}\right)^{\frac{\beta+p-1}{q+\beta}}\left(\int_{\Omega}|\nabla \varphi| ^{p\frac {q+\beta}{q-p+1}}\right)^{\frac{q-p+1}{q+\beta}}.
\end{split}
\end{equation}

Now, by \eqref{assunzionel2}, we note that $(lp-p)\left(\frac{q+\beta}{\beta+p-1}\right)\ge lp$, and for $0\le \varphi\le 1$, \eqref{stimaprec} becomes

\begin{equation}
    \int_{\Omega}
|u|^{q+\beta}\varphi ^{pl}\le \tilde Cl^p\left(\int_{\Omega}|u|^{q+\beta}\varphi ^{lp}\right)^{\frac{\beta+p-1}{q+\beta}}\left(\int_{\Omega}|\nabla \varphi| ^{p\frac {q+\beta}{q-p+1}}\right)^{\frac{q-p+1}{q+\beta}}
\end{equation}

and consequently 

\begin{equation}\label{stimafinale1}
\int_{\Omega}
|u|^{q+\beta}\varphi ^{pl}\le \tilde C l^p\int_{\Omega}|\nabla \varphi| ^{p\frac {q+\beta}{q-p+1}}.
\end{equation}

Now, substituting $\tilde \varphi$ in \eqref{stimagradiente} we obtain 

\begin{equation}
\begin{split}
    \int_{\Omega} B''(H(\nabla u))H(\nabla u)^{2}|u|^{\beta-1}\varphi ^{pl} &\le \hat Cl^p \int_{\Omega}|\nabla
\varphi|^p |u|^{\beta+p-1}\varphi ^{pl-p} \\ & \le \hat Cl^p\left(\int_{\Omega}(|u|^{p+\beta-1}\varphi ^{lp-p})^{\frac{q+\beta}{\beta+p-1}}\right)^{\frac{\beta+p-1}{q+\beta}}\left(\int_{\Omega}|\nabla \varphi| ^{p\frac {q+\beta}{q-p+1}}\right)^{\frac{q-p+1}{q+\beta}}
\end{split}
\end{equation}

and using \eqref{stimafinale1}, we get 

\begin{equation}\label{stimafinale2}
    \int_{\Omega} B''(H(\nabla u))H(\nabla u)^{2}|u|^{\beta-1}\varphi ^{pl}\le \hat C' \int_{\Omega}|\nabla \varphi| ^{p\frac {q+\beta}{q-p+1}}
\end{equation}

where $\hat C' =\hat C'(\Gamma,\gamma,p,\epsilon,\beta)$ is a positive constant.

The conclusion follows now by \eqref{stimafinale1} and \eqref{stimafinale2}.

\end{proof}
We are now ready to prove our main result.

\begin{proof}[Proof of Theorem \ref{teoremaliouville1}]

For $R>0$, let $\varphi _R\in C^{\infty}_c(\R^N)$ be a cut-off function such that
\begin{equation}\label{eq:cutoff}
            \left\{\begin{array}{llll}
                0 \leq \varphi_R \leq 1 \quad &\text{in } \R^N\\
                \varphi_R\equiv 0 \quad &\text{in } \R^N\setminus B(0,2R)\\
                \varphi_R \equiv 1 \quad &\text{in } B(0,R)\\
                |\nabla \varphi_R| \leq \frac{2}{R} &\text{in } B(0,2R) \setminus B(0,R),
            \end{array}\right.
        \end{equation}
where $B(0,R)$ denotes the open ball centered at the origin and with
radius $R$.

\noindent {Case $(i)$: $ \Gamma\kappa^{p-2}(p-1)\ge 1$.} 
From Proposition \ref{propimportante} we have:
\begin{equation}\label{Rinfinito}
     \int_{\R^N} |u|^{q+\beta}\varphi_R ^{pl}\le C R^{N-2\frac{q+\beta}{q-1}}+C R^{N-p\frac{q+\beta}{q-p+1}}.
\end{equation}

Next we prove that $N-2\frac{q+\beta}{q-1}<0$.

To this end, we denote by $\A:=\Gamma^2\kappa^{p-2}\gamma^{-1}$ and  for
$t>\A (p-1)^2$, we set
\begin{equation}
\tilde{\beta}(t):=\frac{2t-\A(p-1)^2+2\sqrt{t(t-\A(p-1)^2)}}{\A(p-1)^2}\quad \text{and}\quad g(t):=2\frac{t+\tilde{\beta}(t)}{t-1}.
\end{equation}

Therefore if $N<g(q)$, we can always choose
$\beta \in \left[1,\tilde{\beta}(q)\right)$ such that
\begin{equation}
    N-2\frac{q+\beta}{q-1}<0
\end{equation}
Observing that 
\begin{equation}
    N-2\frac{q+\beta}{q-1}>N-p\frac{q+\beta}{q-p+1},
\end{equation}
the thesis follows by letting $R\rightarrow\infty$ in \eqref{Rinfinito}.

\noindent {Case $(ii)$: $ 0<\Gamma\kappa^{p-2}(p-1)<1 $.} The proof is similar to the previous case but uses \eqref{valoribeta1}, that is
\begin{equation}
          \tilde \beta (t)=
    \frac{2t-\eta+2\sqrt{t(t-\eta)}}{\eta}
    \end{equation}

    where $\eta:=\kappa^{-p+2}\gamma ^{-1}.$

\noindent {Case $(iii)$: $\kappa =0 $.} 
From Proposition \ref{propk=0} we get
\begin{equation}\label{Rinfinito2}
   \int_{\R^N}
   B''(H(\nabla u))H(\nabla u)|u|^{\beta-1}\varphi^{lp}_R+
   |u|^{q+\beta}\varphi_R^{lp}\le C R^{N-p\frac{q+\beta}{q-(p-1)}}.
\end{equation}
As before, to conclude the proof it is enough to find $\beta$ such that $N-p\frac{q+\beta}{q-(p-1)}<0$. In this case we consider 
\begin{equation}
    \overline{\beta}(t):=\frac{2t-\C(p-1)+2\sqrt{t(t-\C(p-1))}}{\C(p-1)},
\end{equation}
where we have set $\C:= \Gamma\gamma^{-1}.$ The proof can be concluded now as in the previous two cases. 

\end{proof}

\begin{proof}[Proof of Corollary \ref{corollario}] 
The proof follows by case $(iii)$ of Theorem \ref{teoremaliouville1}.

\end{proof}

\begin{proof}[Proof of Theorem \ref{teoremaliouville2}]
Let us fix $R_0>0$ so that $K\subset B(0,R_0)$. For $R>R_0+3$, let $\psi _R \in C_c^{\infty}(\R^N)$ be a cut-off function such that  

\begin{equation}\label{eq:cutoff2}
            \left\{\begin{array}{llll}
                \psi_R\equiv 0 \quad &\text{if } |x|<R_0+1\\
                \psi_R \equiv 1 \quad &\text{if } R_0+2<|x|<R\\
                \psi_R \equiv 0 \quad &\text{if } |x|>2R\\
                |\nabla \psi_R | \leq C  \quad &\text{in } B(0,R_0+2) \setminus B(0,R_0+1)\\
                |\nabla \psi_R| \leq \frac{C}{R} &\text{in } B(0,2R) \setminus B(0,R).
            \end{array}\right.
        \end{equation}

From Proposition \ref{propk0}, substituting $\psi_R$ in \eqref{propk=0} we get 

\begin{equation}\label{cose}
    \int_{R_0+2<|x|<R} B''(H(\nabla u))H(\nabla u)^2 u ^{\beta-1}+ |u|^{q+\beta} \le C+CR^{N-p\frac{q+\beta}{q-(p-1)}}.
\end{equation}

Since $\frac{N\left(p-\frac{\Gamma}{\gamma}\right)+p}{N\frac{\Gamma}{\gamma}-p}\le \frac{N\left(p-1\right)+p}{N-p}$ by \eqref{assunzioni}, choosing $\beta=1$ it follows 
\begin{equation}\label{betauguale1}
    N-p\frac{q+\beta}{q-(p-1)}<0.
\end{equation}

For $R\rightarrow \infty$, recalling that $B(t)\le C(p,\gamma,\Gamma)B''(t)t^2 $ we have

\begin{equation}\label{usarepoh}
    B(H(\nabla u)) \in L^1(\R^N), \qquad u\in L^{q+1}(\R^N).
\end{equation}

Now, using the Pohozaev identity in the anisotropic setting, see Theorem $1.3$ in \cite{MSP} for details, we get 
\begin{equation}\label{identita}
    \frac {N}{q+1} \int_{\R^N} |u|^{q+1} =\int_{\R^N} NB(H(\nabla u))-B'(H(\nabla u))H(\nabla u).
\end{equation}

By $(h_B)$ we get that $$B(t)\le \Gamma \frac{t^p}{p}\le \frac{\Gamma}{\gamma }\frac{B'(t)t}{p},  \qquad \forall t\ge 0.$$ 
Using this inequality in \eqref{identita}, we have
\begin{equation}\label{a}
  \frac {N}{q+1} \int_{\R^N} |u|^{q+1}\le \left(\frac{N\Gamma}{p\gamma}-1\right)\int_{\R^N} B'(H(\nabla u))H(\nabla u). 
\end{equation}
Now if $N\le p\Gamma^{-1}\gamma$, the conclusion follows immediately. Suppose $N> p\Gamma^{-1}\gamma$ and let us  consider the test function $\phi =\varphi_R u$, where $\varphi_R$ is defined in \eqref{eq:cutoff}. Substituting $\phi$ in \eqref{debil1}, we get 

\begin{equation}\label{disu}
\begin{split}
    \int_{B(0,2R)}B'(H(\nabla u))H(\nabla u) \varphi_R &+\int_{B(0,2R)} B'(H(\nabla u)) \langle \nabla H(\nabla u), \nabla
\varphi_R \rangle u \,dx\,  
\\  &=\int_{B(0,2R)} |u|^{q+1}\varphi_R\,dx.
\end{split}
\end{equation}

Using Holder inequality with exponents $\left(\frac{p}{p-1}, q+1, \frac{q+1-p}{p(q+1)}\right)$ we obtain 
\begin{equation}\label{Holder}
\begin{split}
    \left | \int_{B(0,2R)} B'(H(\nabla u)) \langle \nabla H(\nabla u), \nabla
\varphi_R \rangle u \,dx\, \right| \le C \left( \int_{\R^N} H(\nabla u)^p\right )^{\frac{p-1}{p}}\left(\int_{\R^N} |u|^{q+1}\right)^{\frac{1}{q+1}}R^{\frac{N(q+1-p)}{p(q+1)}-1}
\end{split}
\end{equation}

Since \eqref{usarepoh} is in force, and  $\frac{N(q+1-p)}{p(q+1)}-1<0$ by  \eqref{betauguale1}, we can let $R\rightarrow +\infty$ in \eqref{disu} and \eqref{Holder}, to get 

\begin{equation}\label{b}
\int_{\R^N} |u|^{q+1} =\int_{\R^N}B'(H(\nabla u))H(\nabla u).
\end{equation}

Combining \eqref{a} and \eqref{b} we get

\begin{equation}
    \left(\frac{N}{q+1}-\frac{N\Gamma}{p\gamma}+1\right)\int_{\R^N} |u|^{q+1}\le 0.
\end{equation}

Recalling \eqref{assunzioni} 
$$\frac{N}{q+1}-\frac{N\Gamma}{p\gamma}+1>0$$

and therefore we get the thesis.\\

\end{proof}

\end{document}